\documentclass{amsart}
\usepackage{amssymb}

\usepackage[british,UKenglish,USenglish,american]{babel}
\usepackage{graphicx}         
\usepackage{fancyhdr}
\usepackage{rotating}
\usepackage{amsmath}
\usepackage{amssymb}
\usepackage{amsthm}
\usepackage{tikz}
\usepackage{url}
\usepackage{enumerate}
\usepackage{mathtools}
\usepackage{setspace}
\usepackage{color}
\usepackage[normalem]{ulem}
\usetikzlibrary{positioning}

\newtheorem*{notation*}{Notation}

\newtheorem{innercustomthm}{Theorem}
\newenvironment{customthm}[1]
  {\renewcommand\theinnercustomthm{#1}\innercustomthm}
  {\endinnercustomthm}
  
\usepackage{environ}
\newcounter{quote}

\NewEnviron{myquotenumber}{\vspace{3ex}\par
\refstepcounter{quote}%
\hfill\parbox{\dimexpr \textwidth-2cm}
{\centering\small\textit{\BODY}}
\hfill\llap{(\thequote)}\vspace{2ex}\par}

\def\Ind#1#2{#1\setbox0=\hbox{$#1x$}\kern\wd0\hbox to 0pt{\hss$#1\mid$\hss}
\lower.9\ht0\hbox to 0pt{\hss$#1\smile$\hss}\kern\wd0}

\def\notind#1#2{#1\setbox0=\hbox{$#1x$}\kern\wd0
\hbox to 0pt{\mathchardef\nn=12854\hss$#1\nn$\kern1.4\wd0\hss}
\hbox to 0pt{\hss$#1\mid$\hss}\lower.9\ht0 \hbox to 0pt{\hss$#1\smile$\hss}\kern\wd0}

\newtheorem{thm}{Theorem}[section]
\newtheorem{cor}[thm]{Corollary}
\newtheorem{prop}[thm]{Proposition}

\newtheorem{lem}[thm]{Lemma}
\newtheorem{conj}[thm]{Conjecture}

\theoremstyle{definition}
\newtheorem{defn}[thm]{Definition}

\newtheorem{fact}[thm]{Fact}

\theoremstyle{remark}

\newtheorem*{theorem*}{Theorem}

\title[The Borovik--Cherlin conjecture holds in ACF]{The Borovik--Cherlin conjecture holds in ACF}

\author[U. Karhum\"{a}ki]{Ulla Karhum\"{a}ki$^\dagger$}
\address{Universit\'{e} Claude Bernard Lyon 1;  Institut Camille Jordan}
\email{karhumaki@math.univ-lyon1.fr}

\author[N. Ramsey]{Nicholas Ramsey$^{\ddagger}$}
\address{Department of Mathematics \\
University of Notre Dame\\
 USA}
\email{sramsey5@nd.edu}

\thanks{$^\dagger$ Karhum\"{a}ki is supported by ANR project MAS (ANR-25-CE40-5294). $\ddagger$ Ramsey was supported by NSF CAREER award DMS-2442011}

\date{\today}

\begin{document}

\maketitle

\begin{abstract}We show that every faithful, transitive, and generically $(n+2)$-transitive action of a connected group $G$ on an irreducible variety $X$ of dimension $n > 0$, all defined over an algebraically closed field $F$, is isomorphic to the natural action of the projective linear group $PGL_{n+1}(F)$ on the projective space $\mathbb{P}^n(F)$. More precisely, we establish the Borovik--Cherlin conjecture for permutation groups $(G,X)$ definable in models of $ACF$.\end{abstract}

\section{Introduction}The main result of this paper yields the following characterisation. Let $\alpha: G\times X \rightarrow X$ be a faithful and transitive algebraic action of a connected algebraic group $G$ on an irreducible algebraic variety $X$ of dimension $n$, where both $G$ and $X$ are defined over an algebraically closed field $F$. If the induced diagonal action of $G$ on $X^{n+2}$ has a Zariski--open orbit, then the permutation group $(G,X)$ is isomorphic to $(PGL_{n+1}(F), \mathbb{P}^n(F))$ with the natural action of the projective linear group $PGL_{n+1}(F)$ on the projective space $\mathbb{P}^n(F)$.

Although the statement above is purely algebraic, and so are most of the techniques in this paper, our motivation comes from model theory. \emph{Groups of finite Morley rank} can be thought of as model theorists' approach to algebraic groups over algebraically closed fields. The former class is strictly broader than the latter, but the two are closely connected. This connection was highlighted by the famous \emph{Cherlin--Zilber conjecture} \cite{Cherlin1979, Zilber1977} proposing that infinite \emph{simple} groups of finite Morley rank are isomorphic to algebraic groups over algebraically closed fields. While the conjecture remains open, the past two decades have seen a number of fundamental results that provide strong evidence for its validity in the presence of involutions. One such result is the following theorem of Borovik and Cherlin \cite{BC08}: there exists a function $f:\mathbb{N} \to \mathbb{N}$ such that every definably primitive permutation group $(G,X)$ of finite Morley rank satisfies $$RM(G)\leq f(RM(X)).$$

In the same paper \cite{BC08}, Borovik and Cherlin introduced the notion of a \emph{generically $n$-transitive action}, extending the corresponding notion from algebraic group actions to the setting of finite Morley rank. A definable action of a group of finite Morley rank $G$ on a set $X$ of finite Morley rank is generically $n$-transitive if the induced diagonal action of $G$ on $X^n$ has an orbit $\mathcal{O}$ such that $$RM(X^n\setminus \mathcal{O}) < RM(X^n).$$ They proposed the following conjecture, asserting that generically highly transitive actions of finite Morley rank are rare.

\begin{conj}[The Borovik--Cherlin conjecture {\cite[Problem 9]{BC08}}]\label{conj:BC}Let $G$ be a connected group of finite Morley rank acting faithfully, definably, transitively and generically $(n+2)$-transitively on a set $X$ of Morley rank $n \geqslant 1$. Then the pair $(G, X)$ is equivalent to the projective linear group $PGL_{n+1}(F)$ acting on the projective space $\mathbb{P}^n(F)$ for some algebraically closed field $F$.\end{conj}

Substantial progress has recently been made towards Conjecture~\ref{conj:BC}. A notable breakthrough is due to Berkman and Borovik \cite{BB4}, who showed that the case where the underlying set $X$ is a connected abelian group cannot provide examples beyond the expected linear action. More precisely, they proved \cite[Theorem 3]{BB4} that if a connected group $G$ of finite Morley rank acts definably, faithfully, and generically $n$-transitively on a connected abelian group $X$ of finite Morley rank, where $n \geqslant RM(X)$, then $(G,X)$ is definably isomorphic to $(GL_n(F), F^n)$ for some algebraically closed field $F$. In particular, $RM(X)=n$.

It is natural to ask whether the Borovik--Cherlin conjecture admits an algebraic counterpart, namely, whether it holds when $G$ is a connected algebraic group and $X$ is an irreducible variety, both defined over an algebraically closed field. Until recently, this question remained open. In characteristic $0$, Popov \cite{Popov2007} studied generically highly transitive actions of simple groups (and reductive groups). He obtained a complete classification of such actions. Building on this classification, Freitag and Moosa \cite{Freitag-Moosa2025} recently established Conjecture~\ref{conj:BC} in the special case where the permutation group $(G,X)$ is definable in an algebraically closed field of characteristic $0$. The main result of the present paper removes this restriction on the characteristic:

\begin{customthm}{2.9(a)}Let $G$ be a connected group of finite Morley rank acting faithfully, definably, transitively and generically $(n+2)$-transitively on a set $X$ of Morley rank $n \geqslant 1$. Suppose that $(G,X)$ is definable in $F \models ACF$. Then $(G, X)$ is definably isomorphic to the natural action of the projective linear group $PGL_{n+1}(F)$ on the projective space $\mathbb{P}^n(F)$.
\end{customthm}

In a related recent paper \cite{FJM}, the authors proved Conjecture~\ref{conj:BC} in the special case where $(G,X)$ is definable in a differentially closed field of characteristic $0$. This is a setting in which the Cherlin--Zilber conjecture is known to hold. Their proof relies crucially on the result of Freitag and Moosa discussed above. Using the same philosophy, we show that the Cherlin--Zilber conjecture implies the Borovik--Cherlin conjecture (Theorem 2.9(b)).

\section{Generically $(n+2)$-transitive actions}We begin by fixing some notation and terminology. We use the \emph{Morley rank}, denoted by $RM$, and \emph{SU-rank} when it is relevant; no prior familiarity with these notions is assumed. We refer the reader to \cite{Borovik-Nesin,Wagner2000} for general background on groups of finite Morley rank and finite SU-rank, respectively. The notion of SU-rank will only be used in Proposition~\ref{lemma:parabolic-action}, where we compare it with algebraic dimension for Chevalley groups over pseudo-finite fields.

Throughout, Chevalley groups are considered primarily over algebraically closed fields. In one instance, namely Proposition~\ref{lemma:parabolic-action}, we also consider Chevalley groups over pseudo-finite fields. Recall that a Chevalley group is a group of the form $$G=X(F),$$ where $$X\in \{A_n (n \geqslant 1), B_n (n \geqslant 2), C_n (n \geqslant 3), D_n (n \geqslant 4), E_6, E_7, E_8, F_4, G_2\}.$$ The symbol $X$ denotes the type of the group, recording both its Lie type and Lie rank. A Chevalley group is called \emph{classical} if its type belongs to $\{A_n, B_n, C_n, D_n\}$.

We use the term \emph{simple group} in the sense of abstract group theory: a group with no non-trivial proper normal subgroups, including finite ones. Thus, when referring to a simple Chevalley group, we mean an abstractly simple group rather than a group which is simple modulo a finite centre. This distinction is only relevant in Proposition~\ref{lemma:parabolic-action}, where we consider simple Chevalley groups (which using our terminology are abstractly simple).

If $Y$ is an algebraic variety, we write $\dim(Y)$ for the dimension of $Y$ as an algebraic variety. 

The following definition, due to Popov \cite{Popov2007}, will be used throughout.

\begin{defn}\label{def:generictrans}An algebraic action $\alpha: G\times X \rightarrow X$ of a connected algebraic group $G$ on an irreducible algebraic variety $X$ is called \emph{generically $n$-transitive} if the induced diagonal action of $G$ on $X^n$ has a Zariski--open $G$-orbit. 

The \emph{generic transitivity degree} of an action $\beta$ is defined as $$
gtd(\beta):=\sup\{n:\beta\text{ is generically }n\text{-transitive}\}.
$$\end{defn}

The natural action of $GL_n(\mathbb{C})$ on $\mathbb{C}^n$ is a standard example of a generically $n$-transitive action, where the generic orbit consists of the set of bases for the vector space. Another example, central to this paper, is the natural action of $PGL_{n+1}(\mathbb{C})$ on $\mathbb{P}^n(\mathbb{C})$, which is generically $(n+2)$-transitive; the generic orbit is the set of projective bases.

As recalled in the introduction, Borovik and Cherlin \cite{BC08} extended Definition~\ref{def:generictrans} to actions of groups of finite Morley rank. Namely, a definable action of a group $G$ of finite Morley rank on a set $X$ of finite Morley rank is generically $n$-transitive if the diagonal action of $G$ on $X^n$ has an orbit $\mathcal{O}$ satisfying $$
RM(X^n\setminus\mathcal{O})<RM(X^n).
$$
For algebraic actions, this definition agrees with Definition~\ref{def:generictrans}; see \cite[Lemma 6.1]{Freitag-Moosa2025}. Thus the algebraic notion may equivalently be obtained by replacing Morley rank with algebraic dimension. Similarly, the definition extends naturally to definable actions of groups of finite SU-rank by replacing Morley rank with SU-rank.

\subsection*{Proofs}In Proposition~\ref{lemma:parabolic-action}, we consider Chevalley groups over both algebraically closed and pseudo-finite fields. The arguments in the two settings are essentially identical, and we include the pseudo-finite case since it may be useful for future work towards an analogue of Conjecture~\ref{conj:BC} for actions of finite SU-rank groups on finite SU-rank sets definable in $ACFA$ (that is, when one aims to classify the highly generically transitive actions definable in difference algebraic fields). In the pseudo-finite case, we will use the following fact.

\begin{fact}[{\cite[Lemma 4.8]{Karhumaki-Ramsey2026}}] \label{lem: dimension inequality}
Suppose $G$ is a simple Chevalley group over a pseudo-finite field $F$. Then we have the following:
\begin{enumerate}
    \item There is a $\mathcal{L}_{gp}$-definable maximal torus $T \leq G$ with $SU(T) = r \cdot SU(F)$, where $r = dim(T)$ is the Lie rank of $G$.
    \item $SU(G) = dim(G) \cdot SU(F)$.
    \item If $P \leq G$ is a parabolic subgroup, then $P$ is $\mathcal{L}_{gp}$-definable and $SU(P) = dim(P) \cdot SU(F)$. 
\end{enumerate}  
\end{fact}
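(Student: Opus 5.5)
The plan is to reduce everything to counting independent generic coordinates of the field, using the Bruhat decomposition to put a large "cell" of $G$ (and of $P$, $T$) in definable bijection with a power of $F$. First I would fix the interpretation. $G=X(F)$ is interpretable in the field $F$, as the $F$-points of the simple algebraic group modulo its finite centre; conversely, $F$ is interpretable in $G$ through the root subgroups $X_\alpha\cong(F,+)$, with $T$ acting on them by multiplication by squares/characters. I would take $T$ to be the standard split torus generated by the elements $h_\alpha(t)$, $t\in F^\times$. Its definability in $\mathcal{L}_{gp}$ should follow from writing it as the centraliser of a suitable finite set of elements, or as $N_G(U)\cap N_G(U^-)$ with $U,U^-$ the unipotent radicals of opposite Borels; by the centre/commutator structure, $U$ is definable as a finite product of root subgroups, each of which is definable as a centraliser. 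Then $B=N_G(U)=TU$. Any parabolic $P$ is conjugate to a standard $P_J=BW_JB$, which is a finite union of double cosets and hence definable with parameters.

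Second, I would compute ranks. Using Lascar's inequalities, for independent realisations $a_1,\dots,a_k$ of generic types of $F$ (or of $F^\times$, which is cofinite in $F$ and therefore has the same SU-rank), we get
$$SU(a_1,\dots,a_k)=SU(a_1)\oplus\cdots\oplus SU(a_k).$$
Since all the summands are equal, this equals $k\cdot SU(F)$, so $SU(F^k)=SU((F^\times)^k)=k\cdot SU(F)$. Because $T\cong (F^\times)^r$ definably, (1) follows immediately.

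For (2), I would use the big cell $\Omega=U^-TU$. The product map $U^-\times T\times U\to\Omega$ is a definable bijection, and $U,U^-$ are each in definable bijection with $F^N$ via root coordinates, where $N$ is the number of positive roots. This gives $SU(\Omega)=(2N+r)\cdot SU(F)=\dim(G)\cdot SU(F)$. By the Bruhat decomposition, $G=\bigcup_{w\in W}\dot w\,\Omega$ is a finite union of translates of $\Omega$, so $SU(G)=SU(\Omega)$. The same works for (3). Write $P_J=L_J\ltimes U_J$, where $U_J$ is a product of root subgroups and is in bijection with $F^{|\Phi^+\setminus\Phi_J|}$. The Levi factor $L_J$ has its own big cell $U_J^-TU_J^+$ and is covered by finitely many $W_J$-translates of it. This gives $SU(P_J)=\dim(P_J)\cdot SU(F)$. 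The main point needing care is this additivity step. Lascar's inequalities give only $SU(a)+SU(b/a)\le SU(ab)\le SU(a)\oplus SU(b/a)$. Equality for products of independent generics of $F$ has to be argued from the symmetric form of the inequalities, so that each coordinate remains generic over the others. I would also check that the bijections above, in the group language, are definable over a finite set of parameters, so that ranks are computed in a single theory.
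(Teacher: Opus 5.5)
The paper contains no proof of this statement. It is imported as a Fact from Karhum\"{a}ki--Ramsey (Lemma 4.8) and used only as a black box in the pseudo-finite case of Proposition~\ref{lemma:parabolic-action}, so there is no in-paper argument to compare yours with. Judged on its own, your route is sound and is the natural one. Three ingredients do the work: unique factorisation in the big cell $U^-TU$; the covering $G=\bigcup_{w\in W}\dot w\,U^-TU$, which follows from $B\dot wB=U_w\dot wB$ with $\dot w^{-1}U_w\dot w\leqslant U^-$; and the Levi decomposition $P_J=L_J\ltimes U_J$. Together they give $SU(G)=(r+2|\Phi^+|)\,SU(F)$ and $SU(P_J)=(r+|\Phi^+|+|\Phi_J^+|)\,SU(F)$, as required.

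Some details need correcting, though none is fatal.

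(i) The abstractly simple group is $\mathbf{G}_{\mathrm{sc}}(F)/Z$, not $\mathbf{G}_{\mathrm{ad}}(F)$. Its split torus is in general not $(F^\times)^r$. It is the image of $(F^\times)^r$ under $(t_i)\mapsto\prod_i h_{\alpha_i}(t_i)$, a definable homomorphism with finite kernel; for $PSL_2(F)$ with $\mathrm{char}\,F\neq 2$ one gets $F^\times/\{\pm1\}$. Finite-to-one definable maps preserve SU-rank, so (1) still follows.

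(ii) The additivity step you single out is not where the difficulty lies. In the finite-rank setting in which $k\cdot SU(F)$ is meant, the Lascar inequalities are equalities, so $SU(A\times B)=SU(A)+SU(B)$ is immediate.

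(iii) The part that is genuinely under-argued is $\mathcal{L}_{gp}$-definability. Taking $T=C_G(h)$ for a strongly regular $h\in T$ (which exists because $F$ is infinite) is fine. But in rank at least $2$ the centraliser of a root element contains the infinite group $\ker\alpha\cap T$. So ``each root subgroup is definable as a centraliser'' needs a real argument. You could, for instance, extract $X_\alpha$ from commutators with $T$ inside a suitable rank-one centraliser, or quote the known bi-interpretability of $X(F)$ with $F$. The same input is what makes the coordinate maps $t\mapsto x_\alpha(t)$ definable in the structure where SU is computed. That is, it lets you take $F$ to be the field interpreted on a root group, which your rank count implicitly requires.
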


\begin{prop}\label{lemma:parabolic-action}Let $G$ be a simple Chevalley group of type different from $A_n$ over a field $F$ which is either algebraically closed or pseudo-finite. Suppose that $P \leqslant G$ is a parabolic subgroup. Let $rk$ stand for $dim$ when $F$ is algebraically closed and for $SU$ when $F$ is pseudo-finite, and suppose that $rk(G/P)=k$. Then the permutation group $(G,G/P)$ is not generically $(k+2)$-transitive for any $k \geqslant 1$.\end{prop}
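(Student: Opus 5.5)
The plan is a pure dimension count. If $(G,G/P)$ is generically $(k+2)$-transitive, then some $G$-orbit $\mathcal{O}\subseteq (G/P)^{k+2}$ has full rank: $rk(\mathcal{O})=(k+2)\,rk(G/P)=k(k+2)$. The orbit $\mathcal{O}$ is the image of $G$ under a definable map, so $rk(\mathcal{O})\leqslant rk(G)$. Hence generic $(k+2)$-transitivity forces
$$k(k+2)\leqslant rk(G).$$
I will show that this inequality fails for every proper parabolic $P$ and every type other than $A_n$.

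In the algebraically closed case, $rk=\dim$. Write $N$ for the number of positive roots and $r$ for the Lie rank, so $\dim G=2N+r$. Also $\dim G/P$ equals the number of positive roots not lying in the Levi factor of $P$. The function $k\mapsto k(k+2)$ is increasing, and $\dim G/P$ only grows as $P$ shrinks. So it suffices to treat the maximal parabolic of smallest codimension, i.e.\ the smallest $k$, for each type. I will take the following values of this minimal $k$ from the standard list of minimal flag varieties:
\begin{itemize}
\item $B_n$: the quadric, $k=2n-1$, with $k(k+2)=4n^2-1>n(2n+1)=\dim G$.
\item $C_n$: $\mathbb{P}^{2n-1}$, $k=2n-1$, with $4n^2-1>n(2n+1)$.
\item $D_n$: the quadric, $k=2n-2$, with $4n^2-4n>n(2n-1)$ since $2n^2>3n$.
\item $G_2$: $k=5$, with $35>14$.
\item $F_4$: $k=15$, with $255>52$.
\item $E_6$: $k=16$, with $288>78$.
\item $E_7$: $k=27$, with $783>133$.
\item $E_8$: $k=57$, with $3363>248$.
\end{itemize}
This also covers $B_2=C_2$, where $k=3$ and $15>10$. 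The hypothesis $k\geqslant 1$ just excludes $P=G$. In each case $k(k+2)>\dim G$, which is a contradiction.

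In the pseudo-finite case, $rk=SU$, and I would use Fact~\ref{lem: dimension inequality} to reduce to the same arithmetic. By part (2), $SU(G)=\dim(G)\cdot SU(F)$. By part (3), $P$ is definable with $SU(P)=\dim(P)\cdot SU(F)$. The Lascar equalities for the finite-rank group $G$ then give $SU(G/P)=(\dim G-\dim P)\cdot SU(F)$, so $k=\dim(G/P)\cdot SU(F)$. Generic $(k+2)$-transitivity gives $SU(\mathcal{O})=(k+2)k$, while $SU(\mathcal{O})\leqslant SU(G)$ because $\mathcal{O}$ is a definable image of $G$. Writing $s=SU(F)\geqslant 1$ and $d=\dim G/P$, this yields
$$(ds+2)\,ds\leqslant \dim(G)\, s.$$
Since $s\geqslant 1$, dividing by $s$ gives $(ds+2)d\leqslant\dim G$, hence $d(d+2)\leqslant\dim G$. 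This contradicts the algebraic computation above.

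The main obstacle is bookkeeping rather than conceptual. First, I must correctly identify the minimal parabolic codimension for each type; I will quote these values rather than rederive them. Second, I must check that the SU-rank additivity and definability of the quotient $G/P$ hold in the pseudo-finite setting. For the latter I would cite the standard Lascar inequalities for supersimple groups of finite SU-rank, which hold with equality in that setting.
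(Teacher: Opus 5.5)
Your proposal is correct and follows the paper's proof. Both arguments use the same count $\dim G \geqslant \ell(\ell+2)$ with $\ell=\dim(G/P)$, transferred to the pseudo-finite case through the SU-rank fact exactly as you do, and both then reduce to maximal parabolics. The one difference is that the paper actually derives your quoted minimal codimensions ($2n-1$, $2n-1$, $2n-2$, $5$, $15$, $16$, $27$, $57$) by listing the Levi types obtained by deleting each Dynkin node and counting positive roots, whereas you cite them from the standard list of minimal flag varieties; your values agree with the bounds the paper obtains.
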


\begin{proof}Towards a contradiction, suppose that the action is generically $(k+2)$-transitive. If $F$ is algebraically closed this entails $dim(G )\geqslant k(k+2)$ and if $F$ is pseudo-finite then, together with Fact~\ref{lem: dimension inequality}, this entails $$dim(G) \geqslant  dim(G/P)(SU(G/P)+2),$$ in particular, $$dim(G) \geqslant dim(G/P)(dim(G/P)+2).$$ So setting $\ell=dim(G/P)$ we have $dim(G)\geqslant \ell(\ell+2)$.

Both $dim(G)$ and $dim(P)$ can be read off from their associated Dynkin diagrams, since the algebraic dimensions of $G$ and $P$ correspond to numbers of positive roots in the root systems, which are, in turn, determined by the diagrams. Indeed, since $G=X(F)$ for $X\in \{B_n,C_n,D_n,E_6,E_7,E_8,F_4,G_2\}$ we have $$dim(G)=n+2 \cdot |\Phi^+|$$ (for exceptional groups $n\in \{6,7,8,4,2\}$) and $$dim(P)=n+  |\Phi^+|+ |\Phi_I^+|,$$ where $|\Phi^+|,|\Phi_I^+|$ are respectively the number of positive roots in the Dynkin diagrams of $G$ and $P$. So $$\ell=|\Phi^+|-|\Phi_I^+|.$$ 

We will show that none of the cases $X\in \{B_n,C_n,D_n,E_6,E_7,E_8,F_4,G_2\}$ can happen, using simple calculations.

If $G$ is of exceptional type $E_6,E_7, E_8, F_4,$ or $G_2$ then the dimension of $G$ is equal to $78,133, 248,52,14$, respectively. So, from $dim(G)\geqslant \ell(\ell+2)$, we have $$\ell \leqslant 7,10,14,6,2,$$ respectively. 
But even when $P$ is a maximal parabolic (that is, $\ell$ is as small as possible) this cannot happen: this can be checked with a straightforward calculation as, up to conjugacy, maximal parabolic subgroups of $G$ are determined by deleting a single vertex from the Dynkin diagram of $G$, and it is known that $|\Phi^+|$ is equal to $36, 63, 120, 24, 6$ for the types $E_6,E_7, E_8, F_4, G_2$ respectively and equal to $ \frac{n^2+n}{2},n^2,n^2, n(n-1)$ for the types $A_n(n\geqslant 1), B_n(n\geqslant 2), C_n(n\geqslant 3), D_n(n\geqslant 4)$ respectively.

The Dynkin diagrams of Chevalley groups are drawn in \cite[p. 40]{Carter1971}. We now eliminate the exceptional cases using $\ell=|\Phi^+|-|\Phi_I^+|$ (below, when deleting two different nodes from a Dynking diagram produce products of the same diagrams but in different orders, e.g. $A_2 \times A_1$ and $A_1 \times A_2$, we do not mention both as the corresponding maximal parabolic subgroups have the same number of positive roots):

\begin{itemize}
\item $E_6$: we have $|\Phi^+|=36$ and a maximal parabolic $P$ has a Dynkin diagram of the form $A_5,D_5, A_1 \times A_4$ or $A_1 \times A_2\times A_2$. So $|\Phi_I^+|$ is equal to $15,20,11$ or $7$. Thus $\ell \geqslant 16 > 7$.
\item $E_7$: we have $|\Phi^+|=63$ and a maximal parabolic $P$ has a Dynkin diagram of the form $A_6,E_6,A_1 \times D_5, A_2 \times A_4, A_1 \times A_2\times A_3, A_5 \times A_1$ or $D_6$. So $|\Phi_I^+|$ is equal to $21,36,21,13,10,16$ or $30$. Thus $\ell \geqslant 27 > 10$.
\item $E_8$: we have $|\Phi^+|=120$ and a maximal parabolic $P$ has a Dynkin diagram of the form $A_7, E_7, A_1\times E_6, A_2\times D_5, A_3 \times A_4,A_4\times A_2 \times A_1, A_6\times A_1$ or $D_7$. So $|\Phi_I^+|$ is equal to $28,63,37,23,16,14,22$ or $42$. Thus $\ell \geqslant 57 > 14$.
\item $F_4$: we have $|\Phi^+|=24$ and a maximal parabolic $P$ has a Dynkin diagram of the form $C_3, B_3$ or $A_1 \times A_2$. So $|\Phi_I^+|$ is equal to $9$ or $4$. Thus $\ell\geqslant 15 > 6$.
\item $G_2$: we have $|\Phi^+|=6$ and a maximal parabolic $P$ has a Dynkin diagram of type $A_1$ so $|\Phi_I^+|=1$. Thus $\ell=5 > 2$.
\end{itemize}

From now on, we may assume that $G$ is of classical type. For types $B_n$ and $C_n$ we have $dim(G)=n+2n^2$ and for the type $D_n$ we have $dim(G)=2n^2-n$. Thus, since $dim(G) \geqslant \ell(\ell+2)$, in the case $X\in \{B_n(n\geqslant 2) ,C_n(n\geqslant 3)\}$ it is enough to show that $\ell \geqslant 2n-1$. In the case $X=D_n(n\geqslant 4)$, it is enough to show that $\ell \geqslant 2n-2$. It then follows that for types $B_n,C_n$ we have $$2n^2 +n=dim (G) \geqslant \ell(\ell+2)\geqslant 4n^{2} - 1,$$ which is not true for any $n\geqslant 2$. Similarly for type $D_n$ we have $$2n^2 -n=dim (G)\geqslant  \ell(\ell+2) \geqslant 4n^2-4n,$$ which is not true for any $n\geqslant 4$. 

Since $\ell$ is as small as possible when $P$ is a maximal parabolic, we may assume that $P$ is maximal. Below we consider the types $B_n,C_n$ and $D_n$ separately, again using $\ell=|\Phi^+|-|\Phi_I^+|$.

\,

    \textbf{Case 1}:  $G$ has Dynkin diagram $B_{n}(n\geqslant 2)$ or $C_{n}(n\geqslant3)$. We show that $\ell \geqslant 2n-1$.

    Recall that a root system with Dynkin diagram $B_{r}$ or $C_{r}$ has $r^{2}$ positive roots. 
    
    Removing a vertex from a diagram of type $B_{n}$ can result either in

     \begin{enumerate}[(i)]
    \item a diagram of type $B_{n-1}$,
    \item a diagram of type $A_{i} \times B_{j}$ for $i+j=n-1$, where $j \geqslant 2$,
    \item a diagram of type $A_{n-2} \times A_{1}$, or 
    \item a diagram of type $A_{n-1}$. 
\end{enumerate}

    In case (i) we have $$\ell=n^2-(n^2-2n+1)=2n-1\geqslant 2n-1.$$ 
    
Since a root system with Dynkin diagram of type $A_{n}$ has $\frac{n^2+n}{2}$ positive roots, in case (iv) we have for all $n \geqslant 2$ that $$\ell=n^2-\frac{(n-1)^2+(n-1)}{2}=n^2-\frac{(n-1)n}{2}=\frac{n^2}{2}+\frac{n}{2}\geqslant 2n-1.$$

    In case (ii) we have
    $$
 \ell=  n^{2} - \left(\frac{i(i+1)}{2} + j^{2} \right)
    $$
    when $0 \leq i,j$ and $i+j =n-1$ and $j\geqslant 2$. If $i = 0$ we are in case (i). So we may suppose that $i,j > 0$, and thus for all $n \geqslant 2$ we have
    \begin{eqnarray*}
    n^{2} - \frac{i(i+1)}{2} - j^{2} &=& n^{2} - (i+j)^{2} +2ij + \frac{i^{2}}{2} - \frac{i}{2} \\
    &=& n^{2} - (n-1)^{2}  +2ij + \frac{i^{2}}{2} - \frac{i}{2}\\
    &=& 2n-1  +2ij + \frac{i^{2}}{2} - \frac{i}{2}  \\
    &\geq& 2n-1.
    \end{eqnarray*}

    For (iii), we need to check that for all $n \geqslant 2$ we have
    $$n^{2} - \left( \frac{(n-2)^2 + (n-2)}{2} + \frac{1^2+1}{2} \right)=
    n^{2} - \frac{(n-2)(n-1)}{2} - \frac{2}{2} \geq 2n-1.
    $$
    
  As the left hand side is equal to $  n^{2} -\frac{n^2}{2}+ \frac{3n}{2}-
  2$, by easy algebra, one checks that this is true. 
  
  The argument is identical for Dynkin diagrams of type $C_{n}$. 

\,

    \textbf{Case 2}:  $G$ has Dynkin diagram $D_{n}(n \geqslant 4)$. We show that $\ell \geqslant 2n -2$.

    A root system of type $D_{r}$ has $r(r-1)$ positive roots. Removing a vertex from a Dynkin diagram of type $D_{n}$ results in either 
     \begin{enumerate}[(i)]
    \item a diagram of type $D_{n-1}$,
    \item a diagram of type $A_{i} \times D_{j}$ for $i+j = n-1$, $j\geqslant 4$,
    \item a diagram of type $A_{n-3} \times A_{1} \times A_{1}$, 
    \item a diagram of type $A_{n-1}$, or
    \item a diagram of type $A_{n-4} \times A_{3}$.
\end{enumerate}

    In case (i) we have $$\ell=n(n-1)-((n-1)(n-2))=n^2-n-n^2+3n-2=2n-2\geqslant 2n-2.$$

    In case (iv) we have for all $n \geqslant 4$ that
    
    $$\ell=n(n-1)-\frac{(n-1)^2+(n-1)}{2}=n^2-n-\frac{(n-1)n}{2}\geqslant 2n-2.$$

By the above, in case (ii) we may assume that $i, j > 0$. We then need to verify
    $$
    \ell=n(n-1) - \frac{i(i+1)}{2} - j(j-1) \geq 2n-2. 
    $$
    Expanding, we have 
    \begin{eqnarray*}
        n(n-1) - \frac{i(i+1)}{2} - j(j-1) &=& n^{2} - n - \frac{i^{2}}{2} - \frac{i}{2} - j^{2} + j \\
        &=& n^{2} - n - (i+j)^{2} + \frac{i^{2}}{2} + 2ij - \frac{i}{2}  + j \\
        &=& n^{2} - n - (n-1)^{2} + \frac{i^{2}}{2} + 2ij - \frac{i}{2}  + j \\
        &=& n - 1 + \frac{i^{2}}{2} +  2ij - \frac{i}{2} + j,
    \end{eqnarray*}
    so we need to establish
    $$
    \frac{i^{2}}{2} - \frac{i}{2}  + 2ij + j   \geq n-1=i+j.
    $$
    
  This simplifies to $$ \frac{i^{2}}{2} - \frac{i}{2}  + 2ij \geq i$$ which is clearly true by the choice of $i,j$.

    For case (iii), we have to show that 
    $$
    n(n-1) - \frac{(n-3)(n-2)}{2} - \frac{1(2)}{2} - \frac{1(2)}{2} \geq 2n-2.
    $$
    Simplifying, this is equivalent to showing 
    $$
    n^{2} - n - 6 \geq 0,
    $$
    and this is true for all $n \geq 4$.
    
    Finally, we consider case (v). Note that this can only happen for $n \geqslant 5$. We now calculate 
    \begin{eqnarray*}
        \ell &=& n(n-1) - \frac{(n-4)^{2} + (n-4)}{2} - \frac{3^{2}+3}{2} \\
        &=& \frac{2n^{2} - 2n - n^{2} + 8n - 16 - n + 4}{2} - 6 \\
        &=& \frac{n^{2} +5n -12}{2} - 6.
    \end{eqnarray*}
    At $n = 5$ this evaluates to $13$ which is greater than $2n - 2 = 8$ and it grows faster than $2n - 2$, establishing the desired inequality.  \end{proof}

    The following observation is made in \cite[Lemma 2(i)]{Popov2007} but we re-observe it here as we avoid referring to \cite{Popov2007} where fields are of characteristic 0. 

\begin{fact}\label{fact:gct}Let $\alpha_i$ be an action of a connected algebraic group $G$ on an irreducible algebraic variety $X_i$, $i \in\{1,2\}$. Assume that there exists a $G$-equivariant dominant rational map $\phi: X_1 \dashrightarrow X_2$. Then $gtd(\alpha_1) \leqslant gtd(\alpha_2)$.
\end{fact}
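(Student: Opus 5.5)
The idea is to push the generic orbit forward along $\phi$, one coordinate at a time. Fix $n$ with $\alpha_1$ generically $n$-transitive; we show $\alpha_2$ is too. Then taking suprema gives $gtd(\alpha_1)\leqslant gtd(\alpha_2)$.

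\emph{Setup.} Let $U\subseteq X_1$ be a dense open subset on which $\phi$ is a morphism. Since $\phi$ is $G$-equivariant as a rational map, we may replace $U$ by $\bigcup_{g\in G} gU$. This set is open and $G$-stable, and $\phi$ extends to it consistently by $\phi(gx)=g\phi(x)$; well-definedness follows from equivalence of rational maps on dense overlaps. So we may assume $U$ is $G$-stable and $\phi|_U$ is a $G$-equivariant morphism with dense image. Form the product map
\[
\phi^n:U^n\to X_2^n .
\]
It is $G$-equivariant for the diagonal actions. It is also dominant: $X_1^n$ and $X_2^n$ are irreducible because the ground field is algebraically closed, and a product of dominant morphisms is dominant.

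\emph{Pushing the orbit forward.} Let $\mathcal{O}\subseteq X_1^n$ be the Zariski-open $G$-orbit. Then $\mathcal{O}\cap U^n$ is a nonempty open set, since both are dense open in the irreducible space $X_1^n$. It is $G$-stable, hence equal to $\mathcal{O}$, so $\mathcal{O}\subseteq U^n$.

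Because $\mathcal{O}$ is dense in $X_1^n$ and $\phi^n$ is dominant, $\phi^n(\mathcal{O})$ is dense in $X_2^n$. By equivariance, $\phi^n(\mathcal{O})=G\cdot\phi^n(p)$ for any $p\in\mathcal{O}$, so it is a single $G$-orbit.

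\emph{The main obstacle: openness of the image orbit.} We need this dense orbit to be open. Orbits of algebraic group actions are locally closed; this is the standard Borel closed-orbit lemma, which holds in any characteristic. So the orbit is open in its closure, which is all of $X_2^n$. Hence it is Zariski-open, and $\alpha_2$ is generically $n$-transitive.

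Alternatively, one can avoid the closed-orbit lemma using Chevalley's theorem. The image $\phi^n(\mathcal{O})$ is constructible and dense, so it contains a dense open subset $V$ of $X_2^n$. Then $\phi^n(\mathcal{O})=\bigcup_{g\in G} gV$, which is open.
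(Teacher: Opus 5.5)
Your proof is correct and follows the same route as the paper: the domain of definition of $\phi_n$ is $G$-stable and so contains the open orbit, and its image is then a dense $G$-orbit, hence open. You also supply the details the paper leaves implicit, namely the $G$-stability of the domain, the density of the image, and openness via the closed-orbit lemma or Chevalley's theorem.
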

\begin{proof}Suppose that the action $\alpha_1$ is generically $n$-transitive. Since the rational map $\phi_n: X_1^n \dashrightarrow X_2^n$ defined by $(x_1, \ldots, x_n)\mapsto (\phi(x_1), \ldots, \phi(x_n))$ is $G$-equivariant and dominant, the indeterminacy locus of $\phi_n$ lies in the complement to the open $G$-orbit in $X_1^n$, and the image of this orbit under $\phi_n$ is a $G$-orbit open in $X_2^n$.\end{proof}

Recall that a \emph{reductive} algebraic group is an algebraic group with trivial unipotent radical. Let $H$ and $K$ be closed subgroups of a reductive algebraic group $G$, defined over an algebraically closed field. Then, $H \times K$ acts on $G$ by $(x,y) \cdot g = xgy^{-1}$ for $(x,y)\in H \times K$, $g\in G$, and the orbits are the \emph{$H,H$-double cosets} in $G$.
    
    \begin{fact} \cite[Theorem 1.6]{brundan2000double} \label{fact: Brundan}
Let $H$ be a proper reductive subgroup of a connected reductive algebraic group $G$. Then there is no dense $H$,$H$-double coset in $G$.  
\end{fact}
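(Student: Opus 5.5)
The statement to prove is Fact~\ref{fact: Brundan}. I would argue by contradiction through invariant theory: a dense double coset should force $k[G]^{H\times H}=k$, and I would then produce a nonconstant invariant. Let $k$ be the algebraically closed ground field and let $H\times H$ act on $G$ by $(x,y)\cdot g=xgy^{-1}$.

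\textbf{Step 1 (invariants are constant).} Suppose some double coset $HgH$ is dense in $G$. Then $H\times H$ has a dense orbit on the irreducible affine variety $G$. Any regular function invariant under $H\times H$ is constant on that orbit, hence on $G$. So it suffices to exhibit a nonconstant regular function $f\in k[G]$ with $f(xgy)=f(g)$ for all $x,y\in H$.

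\textbf{Step 2 (a candidate invariant).} Since $H$ is reductive, $G/H$ is affine by Matsushima--Richardson, and $k[G/H]=k[G]^{H}$ (right invariants). Since $H\neq G$ and $G$ is connected, $G/H$ is a positive-dimensional affine variety, so $k[G]^H\neq k$. By the Peter--Weyl decomposition (characteristic $0$),
\[
k[G]\cong\bigoplus_{V}V^{*}\otimes V
\]
as a $G\times G$-module, with $V$ running over the irreducible $G$-modules. Taking right $H$-invariants gives $k[G]^H=\bigoplus_V V^*\otimes V^H$. Hence there is a nontrivial irreducible $V$ with $V^H\neq 0$. Taking invariants on both sides gives
\[
k[G]^{H\times H}\cong\bigoplus_V (V^*)^H\otimes V^H .
\]
Since $H$ is linearly reductive in characteristic $0$, $V|_H$ is completely reducible. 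Therefore $(V^*)^H\cong (V^H)^*$ is nonzero for that same $V$. This yields a nonzero invariant matrix coefficient $g\mapsto\langle \xi, g v\rangle$ with $\xi\in (V^*)^H$ and $v\in V^H$. It lies in a nontrivial isotypic component, so it is not constant, and this contradicts Step 1.

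\textbf{Step 3 (positive characteristic) --- the main obstacle.} Two ingredients of Step 2 fail when $\operatorname{char} k=p>0$. First, $k[G]$ is no longer a direct sum but only has a good filtration with subquotients $H^0(\lambda^*)\otimes H^0(\lambda)$. Second, reductive $H$ need not be linearly reductive, so $(V^*)^H$ and $V^H$ need not be dual. My first approach would replace Peter--Weyl by the Donkin--Mathieu theory. If $H$ is reductive, then restrictions of good-filtration modules behave well enough to compare $H$-invariants on $G/H$ with invariants on the filtration layers. One would then locate a layer $H^0(\lambda^*)\otimes H^0(\lambda)$ with $\lambda\neq 0$ carrying nonzero $H\times H$-invariants and lift an invariant through the filtration. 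The lifting step is where I expect the difficulty.

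If that fails, I would switch to a geometric argument. The fixed point $eH\in G/H$ is a closed $H$-orbit. A dense $H$-orbit on the affine variety $G/H$ forces $k[G/H]^H=k$, so $eH$ would be the unique closed orbit and would lie in the closure of every orbit. By Kempf's optimal destabilising cocharacters, this makes $G/H$ an $H$-stable cone contracted to $eH$ by a one-parameter subgroup $\lambda$ of $H$. I would then aim to show that $\lambda$ also contracts $G$ in a way incompatible with $G$ being reductive and $H$ proper. Here one compares the weights of $\lambda$ on $\mathrm{Lie}(G)/\mathrm{Lie}(H)$, which would all have to be positive. That is impossible, because $\mathrm{Lie}(G)$ is self-dual as a $\lambda$-module and its weights are symmetric, while $\mathrm{Lie}(H)$ is $\lambda$-stable with its own symmetric weights. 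Making this weight-symmetry argument rigorous without separability assumptions is the step I expect to be hardest.
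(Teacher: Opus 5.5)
\textbf{Gap: the positive-characteristic case is not proved.}

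Your Steps 1 and 2 are correct and give a complete proof when $\operatorname{char} k=0$. For a nontrivial irreducible $V$ with $V^H\neq 0$, complete reducibility of $V|_H$ gives $\xi\in(V^*)^H$ and $v\in V^H$ with $\langle\xi,v\rangle\neq 0$, and then $g\mapsto\langle\xi,gv\rangle$ is a nonconstant $H\times H$-invariant.

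The paper does not prove this Fact; it imports it from Brundan. However, it needs the Fact in \emph{every} characteristic. The Fact feeds Lemma~\ref{lem: double coset lemma} and hence Theorem~\ref{th:main}(a) for all models of $ACF$, and the authors explicitly avoid characteristic-$0$ sources such as Popov. Your Step 3 does not supply this case. It names two strategies and itself flags their decisive steps as unresolved, and both break at identifiable points.

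\textbf{The good-filtration route.} You would need restriction to $H$ to preserve good filtrations. This is known only for special pairs (Levi subgroups by Donkin--Mathieu, and a few others), not for an arbitrary reductive $H$. Moreover, without linear reductivity the fixed vectors in $V$ and $V^*$ decouple. Already for $H=SL_2$ in characteristic $p$, the Weyl module $V=\Delta(2p-2)$ has $V^H\neq 0$ but $(V^*)^H=0$. So there is nothing canonical to lift through the filtration.

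\textbf{The geometric route.} Kempf's theorem attaches an optimal cocharacter to each unstable point; it does not give one cocharacter contracting all of $G/H$. From the dense orbit you only get $G/H=H\cdot X^+_\lambda$, where $X^+_\lambda=\{x:\lim_{t\to 0}\lambda(t)x=eH\}$ for a single $\lambda$. This yields only $\dim G/H\le \dim H/P_H(\lambda)+\dim X^+_\lambda$, which weight symmetry on $\mathfrak{g}/\mathfrak{h}$ does not contradict.

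The model case of $SL_2$ acting linearly on $\mathbb{A}^2$ shows the inference is invalid. There $0$ is the unique closed orbit and lies in the closure of the dense orbit. Yet every cocharacter has weights $\pm a$, so none contracts $\mathbb{A}^2$, and symmetric tangent weights at the fixed point coexist with a dense orbit.

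What rules out a dense orbit on the tangent space in characteristic $0$ is not weight symmetry. It is a nondegenerate $H$-invariant \emph{symmetric} form on $\mathfrak{g}/\mathfrak{h}$, which gives a nonconstant invariant quadratic function. Such self-duality can fail for $p>0$: for example, $\mathfrak{sl}_p\not\cong\mathfrak{sl}_p^{*}$ as $SL_p$-modules, so $\mathfrak{sl}_p$ carries no nondegenerate invariant form.

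As it stands, the proposal proves the Fact only in characteristic $0$. The positive-characteristic case, which is the one the paper actually needs, still requires a genuinely characteristic-free argument such as Brundan's.
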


\begin{lem} \label{lem: double coset lemma}Suppose $G$ is a connected reductive algebraic group and $H$ is a proper reductive subgroup of $G$. Then there is no orbit $\mathcal{O}$ in the action of $G$ on $(G/H) \times (G/H)$ with $dim(\mathcal{O}) = 2 dim (G/H)$. In particular, the generic transitivity degree of the action of $G$ on $G/H$ is equal to $1$. 
\end{lem}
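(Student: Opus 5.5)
The plan is to translate orbits of $G$ on $(G/H)\times(G/H)$ into $H,H$-double cosets in $G$ and then apply Fact~\ref{fact: Brundan}. Consider the map
$$\pi: G \to (G/H)\times(G/H), \qquad g\mapsto (H, gH),$$
or more symmetrically, the correspondence between $G$-orbits on $(G/H)^2$ and $H$-orbits on $G/H$. Every $G$-orbit on $(G/H)^2$ meets the fibre $\{H\}\times G/H$ over the base point, since $G$ is transitive on the first factor. The stabiliser of the point $H$ is $H$ itself, so the $G$-orbits on $(G/H)^2$ are in bijection with the $H$-orbits on $G/H$ via $G\cdot(H,gH)\leftrightarrow H\cdot gH$.

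The key steps are the following dimension counts. If $\mathcal{O}=G\cdot(H,gH)$, then $\mathcal{O}$ fibres over $G/H$ via the first projection, which is $G$-equivariant and surjective. The fibre over $H$ is $\{H\}\times (HgH/H)$. Hence
$$\dim \mathcal{O}=\dim(G/H)+\dim(HgH/H)=\dim(G/H)+\dim(HgH)-\dim H.$$
So $\dim\mathcal{O}=2\dim(G/H)$ holds exactly when $\dim(HgH)=\dim G$. Since $G$ is connected, hence irreducible, and $HgH$ is an orbit of the algebraic group $H\times H$ (so it is locally closed), this happens exactly when $HgH$ is dense in $G$. Fact~\ref{fact: Brundan} forbids this, because $H$ is a proper reductive subgroup of the connected reductive group $G$. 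This proves the first claim.

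For the ``in particular'', transitivity of $G$ on $G/H$ gives generic $1$-transitivity. A Zariski-open orbit in $(G/H)^2$ would be dense in an irreducible variety of dimension $2\dim(G/H)$; here $G/H$ is irreducible since $G$ is connected. Such an orbit would therefore have dimension $2\dim(G/H)$, which is impossible by the first part. Generic $n$-transitivity for $n\geq 2$ implies generic $2$-transitivity, since the projection $(G/H)^n\to(G/H)^2$ is $G$-equivariant and open, so it sends an open orbit to an open orbit. Hence $gtd=1$.

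The main obstacle is the routine care needed in the fibration dimension formula, namely that orbit maps have equidimensional fibres. I would justify it with the general fibre-dimension theorem applied to the orbit map $G\to\mathcal{O}$, whose fibres are cosets of the stabiliser $H\cap gHg^{-1}$. This gives $\dim\mathcal{O}=\dim G-\dim(H\cap gHg^{-1})$. Comparing with $\dim(HgH)=2\dim H-\dim(H\cap gHg^{-1})$, the latter obtained from the orbit map of $H\times H$, yields the formula above directly. The degenerate case $\dim(G/H)=0$ does not arise because $H$ is proper and $G$ is connected.
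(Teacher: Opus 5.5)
Your proposal is correct and takes essentially the same route as the paper. Both compute $\dim\mathcal{O}=\dim G-\dim(H\cap gHg^{-1})$ from the stabiliser of $(H,gH)$ and $\dim(HgH)=2\dim H-\dim(H\cap gHg^{-1})$ from the $H\times H$-action, deduce $\dim(HgH)=\dim G$, and contradict Fact~\ref{fact: Brundan}. Your additional remark that generic $n$-transitivity for $n\geq 2$ descends to generic $2$-transitivity via the open $G$-equivariant projection $(G/H)^n\to(G/H)^2$ supplies a step that the paper leaves implicit in the ``in particular'' clause.
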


\begin{proof}
    The generic transitivity degree of the action is clearly at least $1$ since $G$ acts transitively on $G/H$. Set $X:=(G/H) \times (G/H)$ and suppose, towards a contradiction, that there is an orbit $\mathcal{O}$ with $dim(\mathcal{O}) = 2 dim(X)$.  Then
    $$
   dim(\mathcal{O}) = 2 dim(G/H) = 2 dim(G) - 2 dim(H). 
    $$
    Note that the stabiliser of $(H,gH)$ in $G$ is $H \cap gHg^{-1}$, thus we have 
    $$
    dim(G) - dim(H \cap gHg^{-1}) = 2 dim(G) - 2 dim(H),
    $$
    which rearranges to give 
    $$
    dim(G) = 2 dim(H) - dim(H \cap gHg^{-1}). 
    $$
    Now we calculate the dimension of the double coset $dim(HgH)$. The group $H \times H$ acts transitively on $HgH$ via $(a,b) \cdot hgh' = ahgh'b^{-1}$ for all $a,b,h,h' \in H$. The stabiliser of $g$ under this action is the set of pairs $(h,g^{-1}hg)$ which lie in $H^{2}$. The projection to the left-coordinate is a bijection with $H \cap gHg^{-1}$, so we have 
    $$
   dim(HgH) = 2 dim(H) - dim(H \cap gHg^{-1}). 
    $$
    This yields the equality 
    $$
    dim(G) = dim(HgH),
    $$
    and, since $G$ was assumed connected, this entails that $HgH$ is a dense $H$,$H$-double coset, contradicting Fact \ref{fact: Brundan}. 
\end{proof}

\begin{fact} \cite[Theorem 30.4(a)]{humphreys1975} \label{fact: parabolic or reductive}
    Let $G$ be a reductive algebraic group, and $H \leq G$ a maximal proper closed subgroup. Then either $H^{\circ}$ is reductive, or $H$ is parabolic. 
\end{fact}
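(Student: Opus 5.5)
We would prove this by the standard Borel--Tits dichotomy, distinguishing whether the unipotent radical of $H$ is trivial. Set $U := R_u(H)$, the unipotent radical of $H$, which by definition is the unipotent radical of $H^{\circ}$: the largest connected normal unipotent subgroup. If $U = 1$, then $H^{\circ}$ has trivial unipotent radical, so it is reductive and we are done. So the real work is when $U \neq 1$, and there we aim to show that $H$ is parabolic.

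Assume $U \neq 1$. Since $U$ is characteristic in $H^{\circ}$, and $H^{\circ}$ is normal in $H$, the group $H$ normalises $U$, so $H \leqslant N_G(U)$. Also $N_G(U) \neq G$: otherwise $U$ would be a nontrivial connected normal unipotent subgroup of $G$, contradicting $R_u(G) = 1$ for reductive $G$. Since $N_G(U)$ is a closed subgroup, maximality of $H$ gives $H = N_G(U)$. It therefore suffices to prove the following.

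\emph{Key claim (Borel--Tits).} If $U$ is a nontrivial connected unipotent subgroup of a reductive group $G$, then there is a proper parabolic subgroup $P$ with $U \leqslant R_u(P)$ and $N_G(U) \leqslant P$.

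Granting the claim, we get $H = N_G(U) \leqslant P \neq G$, so maximality forces $H = P$, and $H$ is parabolic.

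To prove the claim we would use the usual iteration. Put $U_0 = U$ and $N_0 = N_G(U_0)$, and inductively
\[
U_{i+1} = U_i \cdot R_u(N_i), \qquad N_{i+1} = N_G(U_{i+1}).
\]
The following facts should all be routine.
\begin{enumerate}[(a)]
\item $U_0 \leqslant R_u(N_0)$, since $U_0$ is a connected unipotent normal subgroup of $N_0$. Hence $U_{i+1} = R_u(N_i)$ is characteristic in $N_i$, so $N_i \leqslant N_{i+1}$ and the $U_i$ increase.
\item Each $U_i$ is nontrivial and connected unipotent, so each $N_i$ is proper, by the same argument as above.
\item By dimension the chains stabilise, giving $N := N_n$ with $R_u(N) = U_n$ and $N = N_G(R_u(N))$, a proper subgroup containing $N_G(U)$, with $U \leqslant R_u(N)$.
\end{enumerate}

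The main obstacle is the final step: showing that such a subgroup $N$, which is self-normalising in the sense $N = N_G(R_u(N))$ with $R_u(N) \neq 1$, is parabolic. Our first attempt would use a maximal torus $T \leqslant N$ and the root-subgroup description of $G$. The aim is to show that $R_u(N)$ is generated by root subgroups $U_\alpha$ for $\alpha$ in a set $\Psi$ of roots, and that $N$ contains a Borel subgroup. The idea is that $N$ contains $T$, that $R_u(N)$ is $T$-stable and hence a product of root subgroups, and that the self-normalising property forces $N$ to contain every $U_\beta$ normalising $R_u(N)$. One then checks, via a closedness and positivity argument on $\Psi$ (choosing an ordering in which $\Psi$ lies in the positive roots), that $N$ contains $B = T U^{+}$.

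Should that become too long, we would instead invoke the Borel--Tits theorem directly, as in Humphreys \S30.3, and regard the dichotomy above as its immediate consequence.
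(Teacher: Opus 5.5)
Your argument is correct and is the standard derivation of this fact, which is also the route in Humphreys; the paper gives no proof and simply cites it. If $R_u(H)\neq 1$, the Borel--Tits theorem gives $H\leqslant N_G(R_u(H))\leqslant P$ for a proper parabolic $P$, so $H=P$ by maximality, and otherwise $H^{\circ}$ is reductive.

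Your self-contained sketch of the Borel--Tits step is not yet a proof, so falling back on citing Borel--Tits directly is the right choice. It never shows that $N$ contains a maximal torus of $G$. Also, $N=N_G(R_u(N))$ does not by itself make the root subgroups of some positive system normalise $R_u(N)$, and that is the real content of the theorem.
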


\begin{lem}\label{lem:parabolic}
    Suppose $G$ is a connected reductive algebraic group and $H \leq G$ is a closed subgroup. If the action of $G$ on $G/H$ is generically $2$-transitive, then $H$ is contained in a parabolic subgroup. 
\end{lem}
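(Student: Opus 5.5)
The plan is to pass to a maximal proper closed subgroup containing $H$ and apply Fact~\ref{fact: parabolic or reductive} to it. We may assume $H$ is proper, since otherwise $G/H$ is a point, which we treat as degenerate (or note that $G$ itself is parabolic). Because $G$ is connected, $H$ is contained in a maximal proper closed subgroup $M \leqslant G$. For instance, take a proper closed overgroup of $H$ of maximal dimension and, among those, one with the fewest components; a dimension argument then gives maximality.

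The natural map $G/H \to G/M$, $gH \mapsto gM$, is a surjective $G$-equivariant morphism, so in particular it is dominant. The action on $G/H$ is generically $2$-transitive, so Fact~\ref{fact:gct} shows that the action of $G$ on $G/M$ is generically $2$-transitive as well. By Fact~\ref{fact: parabolic or reductive}, either $M$ is parabolic, in which case we are done since $H \leqslant M$, or $M^{\circ}$ is reductive. It remains to rule out the second case.

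Suppose $M^{\circ}$ is reductive. Then $M^\circ$ is a proper reductive subgroup of $G$: it is proper because $\dim M < \dim G$, as $M$ is a proper closed subgroup of the connected group $G$. The map $G/M^{\circ} \to G/M$ is again a $G$-equivariant surjection with finite fibres, so $\dim G/M^\circ = \dim G/M$. Generic $2$-transitivity on $G/M$ gives a $G$-orbit in $(G/M)^2$ of dimension $2\dim(G/M)$. Taking a point of $(G/M^\circ)^2$ over a point of that orbit, its $G$-orbit maps onto the orbit below, so it has dimension at least $2\dim(G/M) = 2\dim(G/M^\circ)$. Hence it has dimension exactly $2\dim(G/M^\circ)$, which contradicts Lemma~\ref{lem: double coset lemma} applied to $M^{\circ}$.

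The step I expect to need the most care is this passage from $M$ to $M^{\circ}$. Fact~\ref{fact: parabolic or reductive} only tells us that the identity component $M^\circ$ is reductive, whereas Lemma~\ref{lem: double coset lemma} is stated for reductive subgroups. The finite-fibre dimension comparison above is what bridges the gap. One also has to check that Lemma~\ref{lem: double coset lemma} applies to a connected reductive subgroup $M^\circ$, which is fine since Fact~\ref{fact: Brundan} only needs $M^\circ$ reductive and proper. Finally, the existence of a maximal proper closed overgroup deserves a short justification, using Noetherianity and dimension.
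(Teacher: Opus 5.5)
Your overall argument is the paper's: pass to a maximal closed overgroup using Fact~\ref{fact:gct}, apply the dichotomy of Fact~\ref{fact: parabolic or reductive}, and in the reductive case lift a generic orbit from $(G/M)^2$ to $(G/M^{\circ})^2$ to contradict Lemma~\ref{lem: double coset lemma}. The paper phrases the lift as splitting the preimage of $\mathcal{O}$ into finitely many $G$-orbits, one of full dimension.

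\textbf{The gap is the existence of $M$.} The paper only says ``we may assume $H$ is maximal'', but your justification for this step is wrong, for two reasons.
\begin{itemize}
\item Connectedness of $G$ does not give a maximal proper closed overgroup. In $G=GL_n$ the proper closed overgroups of $H=SL_n$ are the groups $\det^{-1}(\mu_m)$, and this family has no maximal element.
\item Your selection rule runs the wrong way. If $M\subsetneq M'$ with $M'$ proper closed, maximality of $\dim M$ forces $M'^{\circ}=M^{\circ}$. Then $M'$ simply has more components than $M$, which does not contradict choosing $M$ with the fewest components. Choosing the most components would need a bound on the number of components, and that bound is exactly what can fail.
\end{itemize}

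\textbf{The fix must use generic $2$-transitivity.}
\begin{enumerate}
\item Let $M$ be a proper closed overgroup of $H$ of maximal dimension, and put $N=N_G(M^{\circ})\supseteq M$.
\item Suppose $N=G$. Then $M^{\circ}$ acts trivially on $G/M$, so every $G$-orbit in $(G/M)^2$ has dimension at most $\dim G-\dim M^{\circ}=\dim(G/M)$. Since $G/M$ is generically $2$-transitive by Fact~\ref{fact:gct}, this forces $\dim(G/M)=0$. Hence $M^{\circ}=G$, a contradiction.
\item So $N$ is proper. Any proper closed $M'\supseteq N$ is an overgroup of $H$, so maximality of dimension gives $M'^{\circ}=M^{\circ}$. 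Hence $M'\leqslant N_G(M^{\circ})=N$.
\item Thus $N$ is a maximal proper closed subgroup containing $H$, and the rest of your proof goes through with $N$ in place of $M$.
\end{enumerate}

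Alternatively, you can avoid maximal subgroups altogether with the Borel--Tits theorem. If $H^{\circ}$ is not reductive, $H$ normalises the non-trivial unipotent group $R_u(H^{\circ})$ and so lies in a proper parabolic. If $H^{\circ}$ is reductive, your lifting argument applies verbatim with $H^{\circ}$ in place of $M^{\circ}$.
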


\begin{proof}
    Suppose not, so we assume towards contradiction that $H$ is not contained in a parabolic subgroup of $G$. By Fact~\ref{fact:gct}, we may assume $H$ is maximal and thus, by Fact \ref{fact: parabolic or reductive}, we must have that $H^{\circ}$ is reductive. If there is a generic orbit $\mathcal{O}$ in the action of $G$ on $(G/H)^{2}$, then $\mathcal{O}$ can be written as a union of at most finitely many orbits $\mathcal{O}_{1}, \ldots, \mathcal{O}_{k}$ in $(G/H^{\circ})^{2}$. Then there must be some $i$ with 
    $$
   dim(\mathcal{O}_{i}) = dim(\mathcal{O}) = 2 dim(G/H) = 2 dim(G/H^{\circ}),
    $$
    which contradicts Lemma \ref{lem: double coset lemma}. 
\end{proof}

\begin{thm}\label{th:main}Let $G$ be a connected group of finite Morley rank acting faithfully, definably, transitively and generically $(n+2)$-transitively on a set $X$ of Morley rank $n \geqslant 1$. Suppose that one of the following happens:
\begin{enumerate}[(a)]
\item  $(G,X)$ is definable in $F \models ACF$.
\item The Cherlin--Zilber conjecture holds.
\end{enumerate}Then $(G, X)$ is definably isomorphic to the natural action of the projective linear group $PGL_{n+1}(F)$ on the projective space $\mathbb{P}^n(F)$.\end{thm}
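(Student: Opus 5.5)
We first reduce to the algebraic setting. In case (a), a group of finite Morley rank definable in $F \models ACF$ is definably isomorphic to an algebraic group over $F$. By elimination of imaginaries, the action on $X$ is then definably isomorphic to the action of $G$ on $G/H$ with $H$ the (closed) stabiliser of a point. Morley rank agrees with dimension, and \cite[Lemma 6.1]{Freitag-Moosa2025} matches the two notions of generic transitivity. So we may assume $G$ is a connected algebraic group acting faithfully, transitively and generically $(n+2)$-transitively on the irreducible variety $X=G/H$ with $\dim X = n$.

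In case (b) the plan is instead to follow \cite{FJM}. One shows that $G$ must be a simple group acting with a definable stabiliser. For that, we invoke the standard reductions of \cite{BC08} for generically $2$-transitive actions: the action is definably primitive, and its abelian normal subgroups are handled by \cite[Theorem 3]{BB4}, which forces the rank to be too small for generic $(n+2)$-transitivity. Then Cherlin--Zilber makes $G$ algebraic over some algebraically closed field, and we reduce to case (a).

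Next we force $G$ to be semisimple, indeed simple. If the unipotent radical $R_u(G)$ is nontrivial, its orbits on $X$ are permuted by $G$, and primitivity (which generic $2$-transitivity gives, up to definable blocks) leaves two possibilities. Either $R_u(G)$ acts transitively, making $X$ a homogeneous space of a nilpotent normal subgroup. Or it acts trivially, which contradicts faithfulness. In the transitive case, generic $(n+2)$-transitivity should give a dimension count contradicting \cite[Theorem 3]{BB4}: that result caps the transitivity degree at about $n$ when $X$ is essentially an abelian group. This step is the main obstacle, since it needs care with non-abelian unipotent groups and with the central series; the first thing I would try is to pass to the last nontrivial term of the derived series of $R_u(G)$, whose orbits give a $G$-equivariant dominant map, and apply Fact~\ref{fact:gct}. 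So $G$ is reductive. A central torus acting nontrivially would similarly produce an abelian transitive normal subgroup or a kernel. Hence, after faithfulness, $G$ is semisimple of adjoint type. The same quotient argument together with Fact~\ref{fact:gct} shows that $G$ is simple: a product $G_1\times G_2$ acting generically highly transitively on an irreducible $X$ forces one factor to act trivially on a dominant quotient, and a dimension count then yields a contradiction.

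Finally we treat $G$ simple and reductive. By Lemma~\ref{lem:parabolic}, $H$ lies in a parabolic $P$, and $G/H \to G/P$ is an equivariant dominant map, so $G/P$ is generically $(n+2)$-transitive by Fact~\ref{fact:gct}. Since $k=\dim G/P \le n$, it is in particular generically $(k+2)$-transitive. Proposition~\ref{lemma:parabolic-action} then rules out every type other than $A_m$, so $G=PGL_{m+1}(F)$. For type $A_m$, maximal parabolics have $G/P$ a Grassmannian $Gr(r,m+1)$. The inequality $\dim G = m^2+2m \ge k(k+2)$ forces $k\le m$, while $r(m+1-r)\le m$ forces $r\in\{1,m\}$, that is, projective space or its dual. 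With $k=m$ we get $n\ge m$, and from $\dim G \ge n(n+2)$ we get $n\le m$. So $n=m=\dim G/P=\dim G/H$, and therefore $H$ has finite index in $P$, with $P$ connected. Hence $H=P$ and $X\cong \mathbb{P}^n(F)$; the dual case is isomorphic via the inverse-transpose automorphism.
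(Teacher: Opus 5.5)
There is a genuine gap, and it sits where you invoke primitivity. Generic $2$-transitivity does not give (definable) primitivity, not even ``up to definable blocks''. By the Bruhat decomposition, $G$ has an open orbit on $G/B\times G/B$. So $PGL_3(F)$ acting on its flag variety $G/B$ is generically $2$-transitive, yet the fibres of $G/B\to\mathbb{P}^2(F)$ form a system of blocks of positive dimension.

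Consequently, when you pass to a normal subgroup $N$, the dichotomy ``$N$ acts trivially or transitively'' is not available. This applies to each $N$ you use: the unipotent radical, the last nontrivial derived term of it, a central torus, or a simple factor. Writing $X=G/H$, there is a third case: $NH$ is a proper closed subgroup with $\dim NH>\dim H$. Then $X\to G/NH$ is a $G$-equivariant fibration onto a homogeneous space of dimension $e$ with $0<e<n$. Fact~\ref{fact:gct} only tells you that $G/NH$ is generically $(n+2)$-transitive. To get a contradiction you need that a space of dimension $e$ cannot be generically $(e+3)$-transitive, which is the theorem itself in lower dimension.

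The same false primitivity claim underlies your sketch of case (b). There, in addition, Cherlin--Zilber only applies once you know the relevant group is simple. That needs the O'Nan--Scott reduction of \cite{Macpherson-Pillay1995}, hence definable primitivity, plus finiteness of definable outer automorphism groups.

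The missing idea is the induction on $n$ that the paper runs, with base case $n=1$ given by Hrushovski's classification of actions on strongly minimal sets. By induction, every proper definable subgroup containing $G_x$ is a finite extension of $G_x$. Otherwise $G/H$ would be some $\mathbb{P}^e(F)$ with $e<n$, which is not generically $(e+3)$-transitive.

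With this in hand, your structural route can be completed in case (a). Let $A\neq 1$ be the last nontrivial derived term of $R_u(G)$. Then $A\le H$ contradicts faithfulness, and the intermediate case is now excluded. So $A$ is transitive, hence regular, being abelian and faithful. The group $H^\circ$, acting on $A$ by conjugation, is then faithful and generically $(n+1)$-transitive, so \cite[Theorem 3]{BB4} forces $RM(A)=n+1\neq n$. The central torus and the product of simple factors are handled the same way.

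Even with induction, in case (b) you only get this ``almost primitivity''. That is why the paper passes to $L=N_G(G_x^\circ)$, works with $G/K$, and lifts back via \cite{altinel1999central}.

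Your endgame is correct and essentially the paper's, arguably a bit cleaner: Lemma~\ref{lem:parabolic}, the dominant map to $G/P$ for a maximal parabolic $P$, Proposition~\ref{lemma:parabolic-action}, the Grassmannian count, and then $H=P$ by connectedness of $P$.
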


     \begin{proof}
We prove the theorem by induction on $n$. Note first that connectedness of $G$ and the transitivity of the action entail that $X$ is of Morley degree $1$ (\cite[Lemma 1.4]{BC08}). This means that for $n=1$ the set $X$ is strongly minimal. So, as generic $3$-transitivity implies $RM(X) \geqslant 3$, by Hrushovski's famous classification of definable actions on strongly minimal sets (see e.g. \cite[Theorem 11.98]{Borovik-Nesin}), there is an algebraically closed field $F$ such that the pair $(G,X)$ is definably isomorphic to $(PGL_2(F),\mathbb{P}^1(F))$. 

So suppose $n > 1$. Set $G_x$ to be a point-stabiliser of $x\in X$. Consider a proper definable subgroup $H$ of $G$ containing $G_x$. We show that $H$ is a finite extension of $G_x$. The action of $G$ on $G/H$ is still transitive and generically $(n+2)$-transitive (Fact~\ref{fact:gct}), but it may not be faithful. Taking $R$ to be the kernel of this action, the connected group $G/R$ acts on $G/H$ is faithfully, transitively and generically $(n+2)$-transitively. If the index $[H:G_x] $ was infinite then $RM(G/H):= e$ is strictly smaller than $n > 1$. Then $(G/R, G/H)$ is definably isomorphic to $(PSL_{e+1}(F), \mathbb{P}^{e}(F))$. This means that the action is not generically $(e+3)$-transitive for $e < n$ contradicting generic $(n+2)$-transitivity. So $H$ is a finite extension of $G_x$.

Note then that by transitivity we may recognise $X$ as $G/G_x$ so $G_x$ is infinite as generic $(n+2)$-transitivity entails $RM(G) \geqslant n(n+2)$. This means that the connected component $G_x^\circ \unlhd G_x$ is infinite. Now set $L=N_G(G_x^\circ)$ and note that $L$ is a proper subgroup of $G$, for otherwise the infinite group $G_x^\circ$ would stabilise each point of $G/G_x$ contradicting the fact that $G$ acts on $G/G_x$ faithfully. By the above $L$ is a finite extension of $G_x$. The above also implies that $L$ is a maximal definable subgroup of $G$ as, for any proper definable subgroup $H$ of $G$ containing $G_x$ we have $G_x^\circ=H^\circ$, so $H \leqslant L$. So, if we prove that $G/L=\mathbb{P}^n(F)$ then $L$ is connected and thus equal to $G_x$. Let $K$ be the kernel of the action of $G$ on $G/L$ and set $\overline{G}:=G/K$, $\overline{L}:=L/K$. 

We claim that, in order to prove the theorem, it suffices to show that $(\overline{G}, \overline{G}/\overline{L})$ is definably isomorphic to $(PGL_{n+1}(F), \mathbb{P}^{n}(F))$. To see this, note that $$K = \bigcap_{g \in G} N_{G}(G^{\circ}_{x})^{g} = \bigcap_{g \in G} N_{G}(G^{\circ}_{g\cdot x}).$$ Then $K$ is finite, since $K^{\circ}$ is contained each conjugate of $G^{\circ}_{x}$ and is therefore trivial. Also, since $G$ is connected and $K$ is normal, we get that $K$ is central. The parabolic subgroup $\overline{L}$ of $\overline{G}$ is connected \cite[Corollary 6.4.10]{springer1998linear}, so we get $L = L^{\circ} K$. If $k \in G_{x} \cap K$ and $k \neq 1$, then, by transitivity and faithfulness of the action of $G$ on $X$, there is some $g \in G$ such that $kg \cdot x \neq g\cdot x = gk \cdot x$. Then $kG^{\circ}_{g\cdot x} k^{-1} \neq G^{\circ}_{g \cdot x}$, contradicting $k \in N_{G}(G^{\circ}_{g \cdot x})$ for all $g$. This shows $G_{x} \cap K = 1$. Moreover, we have $L^{\circ} = G^{\circ}_{x}$ so we have $L = L^{\circ} \times K$ and $G_{x} = L^{\circ}$ (so, in particular, $G_{x} = G^{\circ}_{x}$). We also know $G$ is perfect since $\overline{G}$ is perfect and the derived subgroup $G'$ is definable so $G = G'K$ and thus the connectedness of $G$ entails $G = G'$. We have thus shown that $G$ is a perfect central extension of $\overline{G}$. Thus, under the assumption that $(\overline{G},\overline{G}/\overline{L})$ is definably isomorphic to $(PGL_{n+1}(F), \mathbb{P}^{n}(F))$, we see, by \cite[Corollary 1]{altinel1999central}, that $G$ is necessarily algebraic and $L$ is a parabolic subgroup of $G$, and is therefore connected as well \cite[Corollary 6.4.10]{springer1998linear}. But then the identity $L = L^{\circ} \times K$ forces $K = 1$ and we obtain $(G,G/L) \cong (\overline{G},\overline{G}/L) \cong (PGL_{n+1}(F),\mathbb{P}^{n}(F))$. 

Thus, we prove the result for $(\overline{G}, \overline{G}/\overline{L})$; note again that this is a permutation group of finite Morley rank where the action is transitive and generically $(n+2)$-transitive. 

Clearly $\overline{L}$ is a maximal definable subgroup of $\overline{G}$, so the permutation group $(\overline{G}, \overline{G}/\overline{L})$ is definably primitive. Now we may apply `the finite Morley rank analogue of the O'Nan-Scott theorem' by Macpherson and Pillay \cite[Theorem 1.1]{Macpherson-Pillay1995}. They show that either $(\overline{G}, \overline{G}/\overline{L})$ is of `affine type', or there is a definable subgroup $\overline{S}\unlhd \overline{G}$ (called the definable socle) so that one of the following holds (the list in \cite[Theorem 1.1]{Macpherson-Pillay1995} is longer but as they point out connectedness of $\overline{G}$ gives us the reduced list here): \begin{enumerate}[(i)]
    \item $\overline{S}$ is simple non-abelian and $\overline{S} \unlhd \overline{G} \leqslant Aut(\overline{S})$, or
    \item $\overline{S}=\overline{T}_1 \times \overline{T}_2,$ where $\overline{T}_1,\overline{T}_2$ are (definably isomorphic) infinite simple non-abelian definable normal subgroups of $\overline{G}$ both acting regularly on $\overline{G}/\overline{L}$, and $\overline{S}\unlhd \overline{G} \leqslant \overline{W}$ where $\overline{W}$ is an extension of $\overline{S}$ by $ Aut(\overline{T}_1)/\overline{T}_1 \times Sym_2.$
\end{enumerate}

The recent result \cite[Theorem 2]{BB4} immediately implies that $(\overline{G}, \overline{G}/\overline{L})$ cannot be of affine type, thus one of the cases (i),(ii) happens. Now we start using our assumptions (a) or (b). In case (a), it follows from well-known results of Weil, van den Dries and Hrushovski (see \cite[Theorem 1]{Bouscaren1989}) that $\overline{G}$ is definably isomorphic to an algebraic group $G(K)$ over an algebraically closed field $K$ and $\overline{G}/\overline{L}$ is definably isomorphic to an irreducible variety $V(K)$ over $K$. Thus the definable subgroups $\overline{S}$ in (i) and $\overline{T}_1, \overline{T}_2$ in (ii) are simple algebraic groups. Similarly, the assumption (b) implies that the definable subgroups $\overline{S}$ in (i) and $\overline{T}_1, \overline{T}_2$ in (ii) are simple algebraic groups. As the groups of definable field automorphisms of the simple algebraic groups $\overline{S}$, $\overline{T}_1$ are trivial \cite[Lemma 4.1]{Altinel-Borovik-Cherlin2008}, the groups of definable outer automorphisms $DefAut(\overline{S})/\overline{S}$, $DefAut(\overline{T}_1)/\overline{T}_1$ are finite. So $\overline{G}$ is a finite extension of $\overline{S}$, thus equal to $\overline{S}$ by connectedness. At the same time generic $(n+2)$-transitivity entails that $RM(\overline{G})> 2 \cdot RM(\overline{G}/\overline{L})$ so case (ii) cannot happen.

Now the simple non-abelian algebraic group $\overline{G}$ is reductive so by Lemma~\ref{lem:parabolic} $\overline{L}$ is a parabolic subgroup. Lemma~\ref{lemma:parabolic-action} then gives us that $\overline{G}$ is of type $A_d$, that is, $\overline{G}$ is definably isomorphic to $PGL_{d+1}(F)$. As $\overline{L}$ is a parabolic subgroup, we have that $n = RM(\overline{G}/\overline{L}) \geqslant dim(\overline{G}/\overline{L})\geqslant dim(\overline{H})$ (\cite[Fact 4.9]{Karhumaki-Ramsey2026}), where $\overline{H}$ is a maximal torus of $\overline{G}=PGL_{d+1}(F)$. Since the dimension of a maximal torus equals the Lie rank of the group $PGL_{d+1}(F)$, we get that $n \geqslant d$. This means that $dim(\overline{G}/\overline{L})$ is at most $d$: As $\overline{G}$ acts generically $(n+2)$-transitively on $\overline{G}/\overline{L}$ we have that $d^2+2d=dim(G)\geqslant dim(\overline{G}/\overline{L}) (n+2)$. So if $dim(\overline{G}/\overline{L}) $ was strictly bigger than $d$ then we would get that $d > n$, which is not the case. At the same time, since $$dim(\overline{G}/\overline{L})+dim(\overline{L})=dim(\overline{G})=d^2+2d,$$ if $dim(\overline{G}/\overline{L}) < d$  then $dim(\overline{L}) > d^2+d$. But any maximal parabolic of $\overline{G}=PGL_{d+1}(F)$ is of dimension at most $d^2+d$. This means that $dim(\overline{G}/\overline{L})$ is at most $d$ and hence we have $dim(\overline{G}/\overline{L}) = d$. We may now conclude that $\overline{G}/\overline{L} \cong \mathbb{P}^{d}(F)$ as follows. The maximal parabolic $\overline{L}$ is conjugate to a stardard parabolic of $\overline{G}=PGL_{d+1}(F)$ so $\overline{L}$ is the stabiliser of the standard $k$-dimensional subspace of $F
^{d+1}$ for $1 \leqslant k \leqslant d$. This means that $\overline{G}/\overline{L}$ is the Grassmannian $GR(k, d+1)$ so $d=dim(\overline{G}/\overline{L})=k(d+1-k)$. That is, either $k=1$ or $k=d$. If $k=1$ then we have $GR(1, d+1) \cong  \mathbb{P}^{d}(F)$. If $k=d$ we have $\overline{G}/\overline{L} \cong GR(d, d+1)$, which is naturally isomorphic to $\mathbb{P}^{d}(F)$ by sending a hyperplane in $F^{d-1}$ to its corresponding point in the dual projective space. So we have identified $(\overline{G},\overline{G}/\overline{L})$ with $(PGL_{d+1}(F),{\mathbb{P}}^{d}(F))$, where $n \geqslant d$. But this means that $(\overline{G},\overline{G}/\overline{L})$ is not generically $k$-transitive for any $k > d+2$; so $n \leqslant d$. So we have $n=d$ and this finishes our proof.\end{proof}

We finish the paper with an easy observations.

\begin{cor}\label{corol:bounding-gct}Suppose $G$ is an algebraic group acting generically $k$-transitively on an infinite algebraic variety $X$. Then $k \leqslant dim(X)+2$. 
\end{cor}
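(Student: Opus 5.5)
The plan is to reduce to Theorem~\ref{th:main}(a) by passing to a single orbit and then arguing by contradiction. Suppose $G$ acts generically $k$-transitively on the infinite irreducible variety $X$ with $k \geqslant dim(X)+3$, and set $n := dim(X) \geqslant 1$. If the statement allows $G$ to be disconnected, first replace $G$ by $G^\circ$. A dense orbit of $G$ on $X^k$ is a finite union of $G^\circ$-orbits, and one of these has full dimension, so $G^\circ$ is still generically $k$-transitive. Since generic $k$-transitivity implies generic $j$-transitivity for every $j \leqslant k$ (project the open orbit), we may assume $k = n+3$.

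Next we make the action transitive and faithful. The action is generically $1$-transitive, so there is an open orbit $U \subseteq X$. Then $U$ is an irreducible variety of dimension $n$, and $U^{k}$ is open in $X^k$, so it meets the open orbit of $X^k$. That orbit is therefore contained in $U^k$ and is open there. Hence $G$ acts transitively and generically $(n+3)$-transitively on $U$. Let $N$ be the kernel of the action on $U$; then $G/N$ is a connected algebraic group acting faithfully on $U$. All of this is definable in $F \models ACF$, and Morley rank agrees with dimension here, with generic transitivity matching by \cite[Lemma 6.1]{Freitag-Moosa2025}. In particular $G/N$ acts generically $(n+2)$-transitively on $U$.

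Now apply Theorem~\ref{th:main}(a). It gives $(G/N, U) \cong (PGL_{n+1}(F), \mathbb{P}^n(F))$, and in this action the generic orbit on $(\mathbb{P}^n)^{n+3}$ is not open. Indeed, $dim(PGL_{n+1}) = n^2+2n < n(n+3) = dim((\mathbb{P}^n)^{n+3})$ for $n \geqslant 1$, so no orbit can have full dimension. This contradicts generic $(n+3)$-transitivity of $U$, and hence $k \leqslant n+2$.

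One could also bypass the theorem in the low cases by a pure dimension count: generic $k$-transitivity forces $dim(G/N) \geqslant kn$. But that count alone does not bound $k$, so the main theorem is genuinely needed.

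The main obstacle is the reduction step. We must check that restricting to the open orbit $U$ and quotienting by the kernel preserves generic $(n+2)$-transitivity and yields a connected group. Connectedness, faithfulness and transitivity are each needed to invoke Theorem~\ref{th:main}. The points to verify are that the open orbit in $U^k$ is also an orbit for $G/N$, and that the definable-rank and algebraic-dimension notions of generic transitivity coincide. Both are routine, given the cited lemma.
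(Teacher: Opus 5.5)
Your proposal is correct and follows the same route as the paper. Like the paper, you pass to $G^\circ$, restrict to the generic orbit, and quotient by the kernel to get a faithful transitive action. You then invoke Theorem~\ref{th:main}(a) and observe that $(PGL_{n+1}(F),\mathbb{P}^n(F))$ cannot be generically $(n+3)$-transitive. The differences are only in detail: you spell out the projection from generic $(n+3)$- to $(n+2)$-transitivity and the count $n^2+2n<n(n+3)$, which the paper leaves implicit, and you justify the passage to $G^\circ$ directly for irreducible $X$ rather than citing \cite[Lemma 1.8]{BC08}.
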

\begin{proof}Since we may assume $k > 1$ the action of $G^\circ$ on $X$ is also generically $k$-transitive (\cite[Lemma 1.8]{BC08}). We may then replace $X$ by a generic orbit and, similarly as in the proof of Theorem~\ref{th:main}, we observe that $G/K$ acts generically $k$-transitively, transitively, and faithfully on $X$ where $K$ is the kernel of the action. So we replace $G$ by $G/K$. 
Set $n=dim(X)$. If $k \geqslant n+2$, then Theorem~\ref{th:main} implies that $(G,X)$ can be recognised as $(PGL_{n+1}(F),{\mathbb{P}}^{n}(F))$. But this means that $(G,X)$ is not generically $k$-transitive for any $k > n+2$; so $k \geqslant n+2$ implies $k= n+2$ as we wanted.
\end{proof}

\subsection*{Acknowledgements}We would like to thank James Freitag and R\'{e}mi Jaoui for stimulating conversations which ultimately led us to consider this problem.

\bibliographystyle{plain}
\bibliography{Ulla}{}

\begin{thebibliography}{10}

\bibitem{Altinel-Borovik-Cherlin2008}
Tuna Alt{\i}nel, Alexandre~V. Borovik, and Gregory Cherlin.
\newblock {\em Simple {G}roups of {F}inite {M}orley rank}.
\newblock American Mathematical Society Providence, Providence, RI, 2008.

\bibitem{altinel1999central}
Tuna Altinel and Gregory Cherlin.
\newblock On central extensions of algebraic groups.
\newblock {\em The Journal of Symbolic Logic}, 64(1):68--74, 1999.

\bibitem{BB4}
Ay{\c{s}}e Berkman and Alexandre~V. Borovik.
\newblock Primitive permutation groups of finite {M}orley rank and affine type.
\newblock {\em arXiv:2405.07307}, 2024.

\bibitem{BC08}
Alexandre Borovik and Gregory Cherlin.
\newblock Permutation groups of finite {Morley} rank.
\newblock In Z.~Chatzidakis, H.D. Macpherson, A.~Pillay, and A.J. Wilkie,
  editors, {\em Model Theory with applications to algebra and analysis, {I} and
  {II}}. Cambridge University Press, 2008.

\bibitem{Borovik-Nesin}
Alexandre~V. Borovik and Ali Nesin.
\newblock {\em Groups of {F}inite {M}orley {R}ank}, volume~26 of {\em Oxford
  Logic Guides}.
\newblock Oxford University Press, New York, 1994.
\newblock Oxford Science Publications.

\bibitem{Bouscaren1989}
Elisabeth Bouscaren.
\newblock Model theoretic versions of {W}eil's theorem on pregroups.
\newblock In A.~Nesin and A.~Pillay, editors, {\em Notre Dame Mathematical
  Lectures}, pages 177--185, Notre Dame, Indiana, 1989. Notre Dame University
  Press.

\bibitem{brundan2000double}
Jonathan Brundan.
\newblock Double coset density in classical algebraic groups.
\newblock {\em Transactions of the American Mathematical Society},
  352(3):1405--1436, 2000.

\bibitem{Carter1971}
Roger~W. Carter.
\newblock {\em Simple {G}roups of {L}ie {T}ype}.
\newblock Wiley, New York, 1971.

\bibitem{Cherlin1979}
Gregory Cherlin.
\newblock Groups of small {M}orley rank.
\newblock {\em Annals of Mathematical Logic}, 17(1):1--28, 1979.

\bibitem{FJM}
James Freitag, Léo Jimenez, and Rahim Moosa.
\newblock Finite-dimensional differential-algebraic permutation groups.
\newblock {\em Journal of the Institute of Mathematics of Jussieu},
  24(2):603–626, 2025.

\bibitem{Freitag-Moosa2025}
James Freitag and Rahim Moosa.
\newblock Bounding nonminimality and a conjecture of {B}orovik–{C}herlin.
\newblock {\em J. Eur. Math. Soc.}, 27(2):589–613, 2025.

\bibitem{humphreys1975}
{J}ames~{E}. Humphreys.
\newblock {\em Linear {A}lgebraic {G}roups}.
\newblock Graduate {T}exts in {M}athematics. Springer, 1975.

\bibitem{Karhumaki-Ramsey2026}
Ulla Karhum\"{a}ki and Nicholas Ramsey.
\newblock Primitive pseudo-finite permutation groups of finite {S}{U}-rank.
\newblock {\em To apper in the Transactions of the American Mathematical
  Society}.

\bibitem{Macpherson-Pillay1995}
Dugald Macpherson and Anand Pillay.
\newblock Primitive permutation groups of finite {M}orley rank.
\newblock {\em Proceedings of the London Mathematical Society},
  s3-70(3):481--504, 1995.

\bibitem{Popov2007}
Vladimir~L. Popov.
\newblock Generically multiple transitive algebraic group actions.
\newblock In {\em Algebraic groups and homogeneous spaces}, page 481–523.
  Tata Inst. Fund. Res. Stud. Math., Mumbai, 2007.

\bibitem{springer1998linear}
T.~A. Springer.
\newblock {\em Linear Algebraic Groups}.
\newblock Modern Birkh{\"a}user Classics. Birkh{\"a}user, Boston, MA, 2
  edition, 1998.
\newblock 2nd printing 2008.

\bibitem{Wagner2000}
Frank~O. Wagner.
\newblock {\em Simple {T}heories}.
\newblock Kluwer {A}cademic {P}ublishers, {D}ordrecht, {N}{L}, 2000.

\bibitem{Zilber1977}
Boris~I. Zilber.
\newblock Groups and rings whose theory is categorical.
\newblock {\em Fundamenta Mathematicae}, 95(3):173--188, 1977.

\end{thebibliography}

\end{document}